\newtheorem{theorem}{Theorem}[section]
\newtheorem{proposition}[theorem]{Proposition}
\newtheorem{lemma}[theorem]{Lemma}
\newtheorem{example}[theorem]{Example}
\numberwithin{equation}{section}
\newcommand{\Nnn}{{\mathbb N}}
\DeclareMathOperator{\mult}{mult}
\DeclareMathOperator{\wt}{wt}
\begin{document}

\vspace*{-20 mm}

\title{Cyclotomic enumeration of polynomials}
\author{Richard EHRENBORG}

\address{Department of Mathematics, University of Kentucky, Lexington,
  KY 40506-0027.\hfill\break \tt http://www.math.uky.edu/\~{}jrge/,
  richard.ehrenborg@uky.edu.}

\subjclass[2000]
{Primary
05A15; 
Secondary
11T06. 
}

\keywords{The cyclotomic identity;
monic polynomials;
multiplicity of irreducible factors}

\begin{abstract}
Using the cyclotomic identity we compute sums
over $d$-tuples of monic polynomials in $F_{q}[x]$
weighted by the multiplicity of their irreducible factors.
As consequences we determine explicit expressions for
the number of $d$-tuples of polynomials such that their
greatest common divisor is $r$th power free.
We also compute the number of monic polynomials
where the multiplicity of each irreducible factor
belongs to the monoid generated by two relatively prime integers.
\end{abstract}

\maketitle

\section{Introduction}

Let $F_{q}$ be the finite field of cardinality $q$ where $q$ is a prime power.
The number of monic square-free polynomials in the polynomial ring
$F_{q}[x]$ of degree $n$
is given by $q^{n} - q^{n-1}$ for $n \geq 2$.
For this result and its connection
to algebraic geometry; see the papers~\cite{Church_Ellenberg_Farb,Fulman}
and references therein.
Alegre, Juarez and Pajela
extended this enumeration to count
the number of monic polynomials of degree $n \geq r$
that are $r$th power free
and their result is the equally beautiful
$q^{n} - q^{n+1-r}$; see~\cite{Alegre_Juarez_Pajela}.
In this paper we further extend these results to
enumerate other classes of polynomials
defined in terms of multiplicities of their irreducible factors.
Two noteworthy results are as follows:
First, the number of $d$-tuples of monic polynomials
such that their greatest common divisor is $r$th power free
is given by $q^{N} - q^{N+1-dr}$,
where $N$ is the sum of the degrees and
each individual degree is at least~$r$;
see Proposition~\ref{proposition_r_d}.
Second, the number of monic polynomials
where no irreducible factor has multiplicity $1$ is given by
$q^{\lfloor n/2 \rfloor}
+
q^{\lfloor n/2 \rfloor - 1}
-
q^{\lfloor (n-1)/3 \rfloor}$
where the degree $n$ is at least $2$;
see Example~\ref{example_no_multiplicity_1}.
This last example is a special case where the allowed multiplicities
belong to a monoid generated by two relatively prime integers.

The paper is organized as follows.
In Section~\ref{section_Preliminaries}
we review the cyclotomic identity which is the engine behind all
these results. 
In Section~\ref{section_Weighted}
we show that any multivariate generating function with constant coefficient $1$
factors as an infinite product of the form
$\prod_{{\bf i} > {\bf 0}} 1/\left(1 - {\bf z}^{{\bf i}}\right)^{a_{\bf i}}$;
see equation~\eqref{equation_product}.
We use this form of the generating function to obtain the main result
on weighted enumeration of $d$-tuples of monic polynomials.
In the case of a single polynomial, that is $d=1$, this result was also
obtained by Stanley~\cite{Stanley_Some}.
In Section~\ref{section_Enumerative_results}
we apply Theorem~\ref{theorem_main} to enumerate $d$-tuples of monic polynomials
where the multiplicity vector belongs to certain subsets of~$\Nnn^{d}$.
In Section~\ref{section_Two_results} we offer two results on the number
of irreducible factors.
We end the paper with open problems.

\section{Preliminaries}
\label{section_Preliminaries}

We begin by stating the well-known enumeration of
the number of irreducible polynomials with coefficients in the finite field $F_{q}$.
To make this note self-contained we include a brief proof.
\begin{lemma}
The number of monic irreducible polynomials in $F_{q}[x]$
of degree $n$ is given by
\begin{align*}
M(q,n)
& = 
\frac{1}{n} \cdot \sum_{k | n} \mu(n/k) \cdot q^{k},
\end{align*}
where $\mu$ denotes the classical M\"obius function.
\end{lemma}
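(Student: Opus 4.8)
The plan is to derive the formula from the classical factorization of $x^{q^{n}} - x$ over $F_{q}$, followed by M\"obius inversion. The starting point is the structural fact that an element $\alpha$ in an algebraic closure of $F_{q}$ lies in the field $F_{q^{n}}$ precisely when its minimal polynomial over $F_{q}$ has degree dividing $n$. Since $F_{q^{n}}$ consists exactly of the roots of $x^{q^{n}} - x$, and since this polynomial is separable (its formal derivative equals $-1$), the monic irreducible factors of $x^{q^{n}} - x$ are precisely the monic irreducible polynomials over $F_{q}$ whose degree divides $n$, each occurring with multiplicity one. This yields a factorization in which every such irreducible appears exactly once.

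First I would compare degrees on both sides of this factorization. The left-hand side $x^{q^{n}} - x$ has degree $q^{n}$, while the right-hand side groups the irreducible factors by degree: for each divisor $k$ of $n$ there are $M(q,k)$ monic irreducibles of degree $k$, contributing total degree $k \cdot M(q,k)$. Summing over all divisors gives the counting identity
\begin{align*}
q^{n}
& =
\sum_{k \mid n} k \cdot M(q,k) .
\end{align*}

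Next I would invert this relation. Setting $g(k) = k \cdot M(q,k)$, the displayed identity reads $q^{n} = \sum_{k \mid n} g(k)$, which is exactly the hypothesis of the M\"obius inversion formula for arithmetic functions. Applying it yields $g(n) = \sum_{k \mid n} \mu(n/k) \cdot q^{k}$, and dividing by $n$ gives the claimed expression for $M(q,n)$.

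The main obstacle is the first step: rigorously establishing that the irreducible factors of $x^{q^{n}} - x$ are exactly those of degree dividing $n$, with no repetition. The separability, and hence multiplicity one, is immediate from the derivative computation, so the genuine content lies in the two-sided containment between the roots of $x^{q^{n}} - x$ and the elements whose minimal polynomial has degree dividing $n$. This relies on the standard description of finite fields, namely that $F_{q^{k}} \subseteq F_{q^{n}}$ if and only if $k \mid n$. Once this is in place, the degree count and the M\"obius inversion are routine.
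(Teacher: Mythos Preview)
Your argument is correct and is the standard textbook proof. It differs in organization from the paper's proof, though both rest on the same field theory. The paper counts directly the elements of $F_{q^{n}}$ lying in no proper subfield: viewing the proper subfields $\{F_{q^{k}}\}_{k\mid n,\,k<n}$ as a subspace arrangement whose intersection lattice is the divisor lattice of $n$, inclusion--exclusion gives $\sum_{k\mid n}\mu(n/k)\,q^{k}$ such elements, and dividing by $n$ accounts for the $n$ Galois conjugates sharing each minimal polynomial. You instead pass through the intermediate convolution identity $q^{n}=\sum_{k\mid n} k\,M(q,k)$, obtained from the factorization of $x^{q^{n}}-x$, and then apply M\"obius inversion.

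The content is the same: both arguments need that $F_{q^{k}}\subseteq F_{q^{n}}$ if and only if $k\mid n$, and both end with the factor $1/n$ coming from the size of a Galois orbit. The paper's phrasing in terms of a subspace arrangement ties the count to the finite-field method of Athanasiadis, which is thematically apt here; your route is slightly more self-contained and avoids mentioning arrangements, at the cost of an extra inversion step.
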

\begin{proof}
Consider the field $F_{q^{n}}$ as a vector space over the field $F_{q}$
and consider the subspace arrangement consisting of all the subfields of $F_{q^{n}}$,
that is, $\{ F_{q^{k}} \}_{k | n, k < n}$.
Observe that the intersection lattice is the divisor lattice of the positive integer~$n$.
By the finite field method of Athanasiadis~\cite{Athanasiadis}
or the principle of inclusion--exclusion~\cite{EC1},
we know that the number of points
not in any subfield
in~$F_{q^{n}}$ is given by 
\begin{align*}
\left|
F_{q^{n}}
-
\bigcup_{k | n, k < n} F_{q^{k}}
\right|
& =
\sum_{k | n} \mu(n/k) \cdot |F_{q^{k}}|
=
\sum_{k | n} \mu(n/k) \cdot q^{k} ,
\end{align*}
where $\mu$ denotes the classical, that is,
the number--theoretic, M\"obius function.
Finally for such a point~$a$ we know that
the $n$ elements
$a, a^{q}, a^{q^{2}}, \ldots, a^{q^{n-1}}$
form the roots of an irreducible polynomial in $F_{q}[x]$
of degree $n$, explaining the factor $1/n$.
\end{proof}

We next present the classical cyclotomic identity.
There are many proofs of this identity,
such as those using free Lie algebras~\cite[Eq.\ (5.3.17)]{Lothaire},
Lyndon words~\cite[Theorem~5.1.5]{Lothaire}
or bijections~\cite{Metropolis_Rota_1,Metropolis_Rota_2}.
However, we are mainly interested in
the monic polynomials in the ring~$F_{q}[x]$.
Hence we present a brief proof of the cyclotomic identity
using this polynomial ring and
its uniqueness of factorization.
This proof can be found in~\cite{Reutenauer}
and~\cite[Lemma~2.1]{Fulman}.
\begin{theorem}[The Cyclotomic Identity]
The following identity for formal power series holds
\begin{align*}
\frac{1}{1-qz}
& =
\prod_{k \geq 1}
\left(
\frac{1}{1 - z^{k}}
\right)^{M(q,k)} .
\end{align*}
\end{theorem}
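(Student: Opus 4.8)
The plan is to exploit the fact that $F_q[x]$ is a unique factorization domain in which the monic polynomials form a free commutative monoid on the set of monic irreducible polynomials. First I would write down the generating function that counts monic polynomials by degree, namely $\sum_{n \geq 0} (\text{number of monic polynomials of degree } n) \cdot z^n$. Since there are exactly $q^n$ monic polynomials of degree $n$ (the $n$ free coefficients below the leading term each range over $F_q$), this sum is the geometric series $\sum_{n \geq 0} q^n z^n = 1/(1-qz)$, which is precisely the left-hand side of the identity.

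Next I would obtain the same generating function by organizing the count according to the factorization into irreducibles. Because factorization into monic irreducibles is unique, each monic polynomial corresponds to a unique finite multiset of monic irreducible polynomials, and the degree is additive over this factorization. I would therefore write the generating function as an Euler-type product over all monic irreducible polynomials $p$, where each factor $\sum_{m \geq 0} z^{m \cdot \deg(p)} = 1/(1 - z^{\deg p})$ records the choice of multiplicity $m \geq 0$ of $p$. Grouping the irreducible polynomials by degree and using that there are exactly $M(q,k)$ of them in degree $k$ (by the preceding lemma), this product becomes $\prod_{k \geq 1} \bigl(1/(1 - z^k)\bigr)^{M(q,k)}$, the right-hand side of the identity.

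Equating the two expressions for the same generating function yields the cyclotomic identity. The only point requiring a little care — and the place I would expect the main obstacle — is the formal manipulation justifying the passage from the monoid structure to the infinite product: one must check that the product converges in the ring of formal power series, i.e.\ that for each fixed $n$ only finitely many factors contribute to the coefficient of $z^n$. This holds because any monic irreducible polynomial of degree $k > n$ contributes only the constant term $1$ modulo $z^{n+1}$, so that the coefficient of $z^n$ is determined by the finitely many factors with $k \leq n$; this is exactly the content of the unique factorization of monic polynomials truncated at degree $n$. With this convergence point settled, the identity follows immediately from the equality of the two product expansions.
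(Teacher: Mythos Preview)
Your proposal is correct and follows essentially the same route as the paper: both arguments count monic polynomials in $F_{q}[x]$ by degree in two ways, once directly to get $1/(1-qz)$ and once via unique factorization into monic irreducibles to obtain the Euler product. Your version is slightly more detailed in justifying the formal convergence of the infinite product, but the underlying idea is identical.
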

\begin{proof}
The left-hand side generating function enumerates the
number of monic polynomials of degree~$n$ in~$F_{q}[n]$.
Note that
$1/(1 - z^{k})$ is the generating function for enumerating the powers
of one irreducible polynomial of degree $k$.
Hence
$1/(1 - z^{k})^{M(q,k)}$ is the generating function for all possible products
of the $M(q,k)$ irreducible polynomials of degree $k$.
Hence the right-hand side is the generating function for all
products of irreducible polynomials.
The identity follows by the uniqueness of factorization in~$F_{q}[x]$.
\end{proof}

\section{Weighted enumerating according to multiplicities}
\label{section_Weighted}

For a vector ${\bf n} = (n_{1}, \ldots, n_{d})$
in $\Nnn^{d}$
define the monomial
${\bf z}^{{\bf n}} = z_{1}^{n_{1}} \cdots z_{d}^{n_{d}}$.
Let $w_{{\bf n}} = w_{(n_{1}, \ldots, n_{d})}$
be weights and form the generating function
\begin{align*}
g({\bf z})
& =
\sum_{{\bf n} \geq {\bf 0}} w_{{\bf n}} \cdot {\bf z}^{{\bf n}} .
\end{align*}
We furthermore assume that
$w_{{\bf 0}} = w_{(0, \ldots, 0)} = 1$, that is,
the constant term of $g({\bf z})$ is $1$.
Next we show that the generating function $g({\bf z})$
can be expressed as an infinite product.
\begin{lemma}
The generating function $g({\bf z})$ with constant term $1$
can be uniquely written as
\begin{align}
g({\bf z})
& =
\prod_{{\bf i} > {\bf 0}} \left(\frac{1}{1 - {\bf z}^{{\bf i}}}\right)^{a_{\bf i}} .
\label{equation_product}
\end{align}
\end{lemma}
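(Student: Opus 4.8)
The plan is to determine the exponents $a_{\bf i}$ one total degree at a time and to show that each one is forced, thereby establishing existence and uniqueness simultaneously. First I would fix a linear order on $\Nnn^{d} \setminus \{{\bf 0}\}$ that refines the partial order by total degree $|{\bf i}| = i_{1} + \cdots + i_{d}$, for instance ordering by total degree and breaking ties lexicographically. Since each factor $\left(1/(1 - {\bf z}^{{\bf i}})\right)^{a_{\bf i}}$ equals $1$ plus terms of total degree at least $|{\bf i}|$, and only finitely many vectors ${\bf i}$ share a given total degree, the partial products stabilize in each fixed degree. Hence the infinite product on the right-hand side of~\eqref{equation_product} is a well-defined element of the formal power series ring for any choice of exponents, and it has constant term $1$.

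The key step is to pass to logarithms. Because $g({\bf z})$ has constant term $1$, the series $L({\bf z}) = \log g({\bf z})$ is a well-defined formal power series with zero constant term; write $L({\bf z}) = \sum_{{\bf n} > {\bf 0}} c_{{\bf n}} \cdot {\bf z}^{{\bf n}}$. Taking the logarithm of the product and using the expansion $\log(1/(1 - {\bf z}^{{\bf i}})) = \sum_{m \geq 1} {\bf z}^{m {\bf i}}/m$ turns the desired identity~\eqref{equation_product} into the requirement
\begin{align*}
L({\bf z}) & = \sum_{{\bf i} > {\bf 0}} a_{\bf i} \cdot \sum_{m \geq 1} \frac{{\bf z}^{m {\bf i}}}{m} .
\end{align*}
The interchange of $\log$ with the infinite product is legitimate precisely because, in each fixed total degree, only finitely many factors contribute. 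Extracting the coefficient of ${\bf z}^{{\bf n}}$ and observing that $m {\bf i} = {\bf n}$ forces $m$ to divide $\gcd({\bf n}) = \gcd(n_{1}, \ldots, n_{d})$ with ${\bf i} = {\bf n}/m$, I obtain the Möbius-type relation
\begin{align*}
c_{{\bf n}} & = \sum_{m \mid \gcd({\bf n})} \frac{1}{m} \cdot a_{{\bf n}/m} .
\end{align*}

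Finally I would solve this system recursively on the total degree. Isolating the $m = 1$ term yields
\begin{align*}
a_{{\bf n}} & = c_{{\bf n}} - \sum_{\substack{m \mid \gcd({\bf n}) \\ m \geq 2}} \frac{1}{m} \cdot a_{{\bf n}/m} ,
\end{align*}
where every vector ${\bf n}/m$ on the right has strictly smaller total degree than ${\bf n}$. By induction on $|{\bf n}|$ this expresses each $a_{{\bf n}}$ uniquely in terms of the coefficients $c_{{\bf n}'}$ with $|{\bf n}'| \leq |{\bf n}|$, which gives both a valid choice of exponents and its uniqueness. The main point requiring care is that the recursion divides by the integers $m$, so the argument should be carried out over a coefficient field of characteristic zero, such as $\Rrr$ or $\mathbb{Q}$; once that is granted, the remainder is bookkeeping in the formal power series topology.
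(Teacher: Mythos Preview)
Your proof is correct and follows essentially the same route as the paper: take logarithms, expand $\log(1/(1-\mathbf{z}^{\mathbf{i}}))$ as a power series, and compare coefficients to obtain the divisor-sum relation $c_{\mathbf{n}} = \sum_{m \mid \gcd(\mathbf{n})} a_{\mathbf{n}/m}/m$, then invert. The only cosmetic difference is that the paper carries out the inversion via explicit M\"obius inversion to write $a_{\mathbf{n}}$ in closed form, whereas you invert recursively by total degree; both establish existence and uniqueness.
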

\begin{proof}
To observe this fact, take the logarithm on both sides
\begin{align*}
\sum_{{\bf n} \geq {\bf 0}} b_{{\bf n}} \cdot {\bf z}^{{\bf n}} 
=
\log(g({\bf z}))
& =
\sum_{{\bf i} > {\bf 0}} a_{\bf i} \cdot \log\left(\frac{1}{1 - {\bf z}^{{\bf i}}}\right) 
=
\sum_{{\bf i} > {\bf 0}} a_{\bf i} \cdot \sum_{k \geq 1}  \frac{1}{k} \cdot {\bf z}^{k \cdot {\bf i}} .
\end{align*}
Note that $w_{{\bf 0}} = 1$ implies that $b_{{\bf 0}} = 0$.
By comparing the coefficients of ${\bf z}^{{\bf n}}$ and
multiplying with $\gcd({\bf n})$
we have
\begin{align*}
\gcd({\bf n}) \cdot b_{{\bf n}}
& =
\sum_{k | \gcd({\bf n})}
\frac{\gcd({\bf n})}{k} \cdot a_{{\bf n}/k} 
=
\sum_{k | \gcd({\bf n})}
\gcd({\bf n}/k) \cdot a_{{\bf n}/k} .
\end{align*}
Hence by M\"obius inversion and dividing by $\gcd({\bf n})$ we have
\begin{align*}
a_{{\bf n}}
& =
\frac{1}{\gcd({\bf n})}
\cdot
\sum_{k | \gcd({\bf n})}
\mu(k) \cdot \gcd({\bf n}/k) \cdot b_{{\bf n}/k} 
=
\sum_{k | \gcd({\bf n})}
\frac{\mu(k)}{d} \cdot b_{{\bf n}/k} ,
\end{align*}
showing that the factorization in~\eqref{equation_product} exists
and is unique.
\end{proof}

Let $F_{q}^{\text{monic}}[x]$
denote the set of all
{\em monic} polynomials in $F_{q}[x]$.
For two polynomials $p$ and $s$ in $F_{q}^{\text{monic}}[x]$,
where $s$ is irreducible, let
$\mult(p;s)$ denote the multiplicity of $s$ in the polynomial $p$.
That is, the polynomial $p$ factors as
\begin{align*}
p
& =
\prod_{s} s^{\mult(p;s)} ,
\end{align*}
where the product is over all monic irreducible polynomials $s$.
Similarly for a list ${\bf p} = (p_{1}, \ldots, p_{d}) \in F_{q}^{\text{monic}}[x]^{d}$
let
$\mult({\bf p};s)
= 
(\mult(p_{1};s), \ldots, \mult(p_{d};s))$
and
$\deg({\bf p})
= 
(\deg(p_{1}), \ldots, \deg(p_{d}))$.
For a list ${\bf p}$ in $F_{q}^{\text{monic}}[x]^{d}$
define its {\em weight} (relative to $w$ as above) to be the product
\begin{align*}
\wt({\bf p})
& =
\prod_{s} w_{{\mult({\bf p};s)}} ,
\end{align*}
where $s$ ranges over all irreducible polynomials $s$.
Note that the product is well-defined since we assumed
that $w_{{\bf 0}}$ is $1$
and hence only a finite number of factors are not equal to~$1$.

For a vector ${\bf n} \in \Nnn^{d}$
let $\alpha_{q}({\bf n})$ denote the sum of the weights
of $d$-tuples of monic polynomials in $F_{q}^{\text{monic}}[x]^{d}$
of degree ${\bf n}$, that is, 
\begin{align*}
\alpha_{q}({\bf n})
& =
\sum_{\substack{{\bf p} \in F_{q}^{\text{monic}}[x]^{d} \\ \deg({\bf p}) = {\bf n}}}
\wt({\bf p}) .
\end{align*}
Note that if the weights $w_{{\bf n}}$ are all
$0$ or $1$ then
$\alpha_{q}({\bf n})$ enumerates $d$-tuples of monic polynomials
such that the multiplicity vector
$\mult({\bf p};s)$ belongs to the set
$\{{\bf n} \in \Nnn^{d} : w_{{\bf n}} = 1\}$
for all irreducible polynomials~$s$.

\begin{theorem}
The generating function for the sum of the weights
of the $d$-tuples of monic polynomials
is given by
\begin{align*}
\sum_{{\bf n} \geq {\bf 0}}
\alpha_{q}({\bf n}) \cdot {\bf z}^{{\bf n}}
& =
\prod_{{\bf i} > {\bf 0}} \left(\frac{1}{1 - q \cdot {\bf z}^{{\bf i}}}\right)^{a_{\bf i}} ,
\end{align*}
where the powers $a_{{\bf i}}$ are given by equation~\eqref{equation_product}.
\label{theorem_main}
\end{theorem}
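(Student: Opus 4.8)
The plan is to factor the weighted generating function over the monic irreducible polynomials, mirroring the proof of the cyclotomic identity given above. First I would invoke unique factorization in $F_{q}[x]$ to identify a $d$-tuple ${\bf p}$ of monic polynomials with the collection of its multiplicity vectors $\left(\mult({\bf p};s)\right)_{s}$, as $s$ ranges over all monic irreducible polynomials; here all but finitely many of these vectors equal ${\bf 0}$. Under this correspondence the degree is additive, $\deg({\bf p}) = \sum_{s} \deg(s) \cdot \mult({\bf p};s)$, while by definition the weight is multiplicative, $\wt({\bf p}) = \prod_{s} w_{\mult({\bf p};s)}$. These two facts together let me express the weighted generating function as a product indexed by the irreducible polynomials.

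Concretely, I would establish the formal power series identity
\[
\sum_{{\bf n} \geq {\bf 0}} \alpha_{q}({\bf n}) \cdot {\bf z}^{{\bf n}}
=
\prod_{s} \left( \sum_{{\bf m} \geq {\bf 0}} w_{{\bf m}} \cdot {\bf z}^{\deg(s) \cdot {\bf m}} \right) ,
\]
where the interchange of the sum and the product is legitimate because only finitely many irreducibles contribute to any fixed multidegree ${\bf n}$. The inner factor associated with a single irreducible $s$ of degree $k$ is precisely $g(z_{1}^{k}, \ldots, z_{d}^{k})$, namely $g({\bf z})$ with each $z_{i}$ replaced by $z_{i}^{k}$. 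Grouping the irreducibles by degree and recalling that there are $M(q,k)$ of degree $k$, I obtain
\[
\sum_{{\bf n} \geq {\bf 0}} \alpha_{q}({\bf n}) \cdot {\bf z}^{{\bf n}}
=
\prod_{k \geq 1} g(z_{1}^{k}, \ldots, z_{d}^{k})^{M(q,k)} .
\]

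Next I would substitute the product form of $g$ from equation~\eqref{equation_product}. Since replacing $z_{i}$ by $z_{i}^{k}$ turns ${\bf z}^{{\bf i}}$ into ${\bf z}^{k \cdot {\bf i}}$, this gives the double product $\prod_{k \geq 1} \prod_{{\bf i} > {\bf 0}} \left(1/(1 - {\bf z}^{k \cdot {\bf i}})\right)^{a_{{\bf i}} \cdot M(q,k)}$. Finally, fixing ${\bf i}$ and applying the cyclotomic identity with $z$ specialized to the monomial ${\bf z}^{{\bf i}}$ collapses the inner product over $k$ to $1/(1 - q \cdot {\bf z}^{{\bf i}})$, yielding exactly $\prod_{{\bf i} > {\bf 0}} \left(1/(1 - q \cdot {\bf z}^{{\bf i}})\right)^{a_{{\bf i}}}$, as claimed. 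I expect the only delicate point to be justifying the interchange of the infinite sum with the infinite product, together with the rearrangement of the infinite products, as genuine identities of formal power series; once that bookkeeping is in place, the index manipulation and the two appeals to earlier results, namely unique factorization and the cyclotomic identity, are routine.
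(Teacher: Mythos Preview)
Your proposal is correct and follows essentially the same route as the paper: factor the weighted generating function over monic irreducibles via unique factorization, group by degree to get $\prod_{k\geq 1} g(z_{1}^{k},\ldots,z_{d}^{k})^{M(q,k)}$, insert the product form~\eqref{equation_product} of $g$, swap the two products, and then apply the cyclotomic identity with $z={\bf z}^{{\bf i}}$. The paper's proof is line-for-line the same chain of equalities, only presented more tersely and without your explicit remarks on additivity of degree, multiplicativity of weight, and convergence of the formal products.
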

\begin{proof}
After the simultaneous substitutions $z_{i} \longmapsto z_{i}^{k}$ we have
\begin{align*}
\sum_{{\bf n} \geq {\bf 0}} w_{{\bf n}} \cdot {\bf z}^{k \cdot {\bf n}}
& = 
\prod_{{\bf i} > {\bf 0}} \left(\frac{1}{1 - {\bf z}^{k \cdot {\bf i}}}\right)^{a_{\bf i}} .
\end{align*}
This quantity corresponds to one irreducible polynomial of degree $k$.
Multiplying over all irreducible polynomials~$s$
we obtain the sought after generating function
\begin{align*}
\sum_{{\bf n} \geq {\bf 0}}
\alpha_{q}({\bf n}) \cdot {\bf z}^{{\bf n}}
& =
\prod_{s}
\sum_{{\bf n} \geq {\bf 0}} w_{{\bf n}} \cdot {\bf z}^{\deg(s) \cdot {\bf n}} \\
& =
\prod_{k \geq 1}
\left(
\sum_{{\bf n} \geq {\bf 0}} w_{{\bf n}} \cdot {\bf z}^{k \cdot {\bf n}}
\right)^{M(q,k)} \\
& =
\prod_{k \geq 1}
\left(
\prod_{{\bf i} > {\bf 0}} \left(\frac{1}{1 - {\bf z}^{k \cdot {\bf i}}}\right)^{a_{\bf i}}
\right)^{M(q,k)} \\
& =
\prod_{{\bf i} > {\bf 0}}
\left(
\prod_{k \geq 1}
\left(\frac{1}{1 - {\bf z}^{k \cdot {\bf i}}}\right)^{M(q,k)}
\right)^{a_{\bf i}} .
\end{align*}
Now applying the cyclotomic identity with $z = {\bf z}^{{\bf i}}$
for each ${\bf i} > {\bf 0}$ yields the desired result.
\end{proof}

See Stanley~\cite[Section~6]{Stanley_Some}
for the case $d=1$ of this result.

\section{Enumerative results}
\label{section_Enumerative_results}

We now give explicit examples of applications of Theorem~\ref{theorem_main}.

\begin{proposition}
Let $d$ and $r$ be two positive integers.
The number of $d$-tuples ${\bf p} = (p_{1}, \ldots, p_{d})$
of monic polynomials such that
their degrees are given by the vector ${\bf n} = (n_{1}, \ldots, n_{d})$
and
their greatest common divisor
$\gcd(p_{1}, \ldots, p_{d})$
is $r$th power free
is given by
\begin{align}
\alpha_{q}({\bf n})
& =
\begin{cases}
q^{N} - q^{N +1 - rd} 
&
\text{ if }
\min(n_{1}, \ldots, n_{d}) \geq r , \\
q^{N}
&
\text{ if }
\min(n_{1}, \ldots, n_{d}) \leq r-1 , 
\end{cases}
\label{equation_M_1}
\end{align}
where $N$ denotes the sum $n_{1} + \cdots + n_{d}$.
\label{proposition_r_d}
\end{proposition}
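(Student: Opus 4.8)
The plan is to choose weights $w_{\bf n}$ that encode ``the greatest common divisor is $r$th power free,'' to compute the resulting generating function $g({\bf z})$ in closed form, to read off its product representation~\eqref{equation_product}, and finally to apply Theorem~\ref{theorem_main} and extract the coefficient of ${\bf z}^{\bf n}$. First I would translate the condition into multiplicities: for an irreducible polynomial $s$ the multiplicity of $s$ in $\gcd(p_{1}, \ldots, p_{d})$ equals $\min(\mult(p_{1};s), \ldots, \mult(p_{d};s))$, so the greatest common divisor is $r$th power free precisely when, for every irreducible $s$, the multiplicity vector $\mult({\bf p};s)$ does \emph{not} have all its coordinates at least $r$. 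Accordingly I would set $w_{\bf n} = 0$ whenever $\min(n_{1}, \ldots, n_{d}) \geq r$ and $w_{\bf n} = 1$ otherwise. With this choice $\wt({\bf p}) = \prod_{s} w_{\mult({\bf p};s)}$ equals $1$ exactly for the tuples we wish to count and $0$ for all others, so that $\alpha_{q}({\bf n})$ is the desired number.

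Next I would compute $g({\bf z})$ by inclusion--exclusion. Starting from the full sum $\prod_{i=1}^{d} 1/(1-z_{i})$ and subtracting the forbidden monomials, namely those with every exponent at least $r$, which sum to $\prod_{i=1}^{d} z_{i}^{r}/(1-z_{i})$, gives
\begin{align*}
g({\bf z})
& =
\frac{1 - z_{1}^{r} \cdots z_{d}^{r}}{\prod_{i=1}^{d} (1-z_{i})} .
\end{align*}
Comparing with~\eqref{equation_product} and invoking the uniqueness in the Lemma, the only nonzero exponents are $a_{{\bf e}_{i}} = 1$ for each standard basis vector ${\bf e}_{i}$ (from the denominator) and $a_{(r, \ldots, r)} = -1$ (from the numerator).

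Finally I would apply Theorem~\ref{theorem_main}, which replaces each ${\bf z}^{\bf i}$ by $q \cdot {\bf z}^{\bf i}$ in the product, yielding
\begin{align*}
\sum_{{\bf n} \geq {\bf 0}} \alpha_{q}({\bf n}) \cdot {\bf z}^{\bf n}
& =
\frac{1 - q \cdot z_{1}^{r} \cdots z_{d}^{r}}{\prod_{i=1}^{d} (1-q \cdot z_{i})} .
\end{align*}
Since the coefficient of ${\bf z}^{\bf n}$ in $1/\prod_{i=1}^{d}(1-q \cdot z_{i})$ equals $q^{N}$ with $N = n_{1} + \cdots + n_{d}$, the numerator yields a main term $q^{N}$ and subtracts $q \cdot q^{N-rd} = q^{N+1-rd}$ precisely when $n_{i} \geq r$ for every $i$, and nothing otherwise. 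A short case split on whether $\min(n_{1}, \ldots, n_{d}) \geq r$ then produces~\eqref{equation_M_1}.

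I expect the only substantive step to be the first one: correctly identifying that the $r$th-power-free condition on the greatest common divisor forbids exactly the multiplicity vectors with all coordinates at least $r$, and verifying that the product weight $\wt$ enforces this simultaneously at every irreducible factor. Once $g({\bf z})$ is in the closed rational form above, reading off the finitely many nonzero $a_{\bf i}$ and extracting coefficients are both routine.
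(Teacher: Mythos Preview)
Your proposal is correct and follows essentially the same route as the paper: choose $w_{\bf n}=1$ iff $\min(n_{1},\ldots,n_{d})<r$, compute $g({\bf z})=(1-z_{1}^{r}\cdots z_{d}^{r})/\prod_{i}(1-z_{i})$, apply Theorem~\ref{theorem_main}, and read off the coefficient of ${\bf z}^{\bf n}$. The only difference is cosmetic---you spell out the translation from ``$\gcd$ is $r$th power free'' to ``$\min$ of the multiplicity vector is $<r$'' and name the exponents $a_{{\bf e}_{i}}=1$, $a_{(r,\ldots,r)}=-1$ explicitly, whereas the paper passes directly from the rational form of $g$ to the rational form with $q$ inserted.
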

\begin{proof}
We use the weights $w_{{\bf n}}$,
given by $1$ if $\min(n_{1}, \ldots, n_{d}) < r$ and $0$ otherwise.
Next note that
\begin{align}
\nonumber
\sum_{n_{1}, \ldots, n_{d} \geq 0}
z_{1}^{n_{1}} \cdots z_{d}^{n_{d}}
& =
\frac{1}{1-z_{1}} \cdots \frac{1}{1-z_{d}} , \\
\label{equation_min<r}
g({\bf z})
=
\sum_{\substack{n_{1}, \ldots, n_{d} \geq 0 \\ \min(n_{1}, \ldots, n_{d}) < r}}
z_{1}^{n_{1}} \cdots z_{d}^{n_{d}}
& =
\frac{1 - z_{1}^{r} \cdots z_{d}^{r}}{(1-z_{1}) \cdots (1-z_{d})} .
\end{align}
We apply Theorem~\ref{theorem_main}
to the generating function $g({\bf z})$:
\begin{align*}
\frac{1 - q \cdot z_{1}^{r} \cdots z_{d}^{r}}{(1 - q \cdot z_{1}) \cdots (1 - q \cdot z_{d})} 
& =
(1 - q \cdot z_{1}^{r} \cdots z_{d}^{r})
\cdot
\sum_{n_{1}, \ldots, n_{d} \geq 0}
q^{n_{1} + \cdots + n_{d}} \cdot
z_{1}^{n_{1}} \cdots z_{d}^{n_{d}} \\
& =
\sum_{\substack{n_{1}, \ldots, n_{d} \geq 0 \\ \min(n_{1}, \ldots, n_{d}) < r}}
q^{n_{1} + \cdots + n_{d}} \cdot
z_{1}^{n_{1}} \cdots z_{d}^{n_{d}} \\
& +
\sum_{\substack{n_{1}, \ldots, n_{d} \geq 0 \\ \min(n_{1}, \ldots, n_{d}) \geq r}}
(1 - q^{1-rd}) \cdot
q^{n_{1} + \cdots + n_{d}} \cdot
z_{1}^{n_{1}} \cdots z_{d}^{n_{d}} .
\end{align*}
The result follows by reading off the coefficient of ${\bf z}^{\bf n}$.
\end{proof}

\begin{example}
{\rm
Setting $r=1$ in
Proposition~\ref{proposition_r_d}
we obtain that
the number of $d$-tuples of monic polynomials 
of degree $(n_{1}, \ldots, n_{d})$
that are relatively prime
is given by
\begin{align}
\begin{cases}
q^{N} - q^{N +1 - d} 
&
\text{ if }
\min(n_{1}, \ldots, n_{d}) \geq 1 , \\
q^{N}
&
\text{ if }
\min(n_{1}, \ldots, n_{d}) = 0. 
\end{cases}
\label{equation_M_r=1}
\end{align}
}
\label{example_M_r=1}
\end{example}

\begin{example}
{\rm
Similarly, setting $d=1$ in the proposition
we obtain the Alegre--Juarez--Pajela result~\cite{Alegre_Juarez_Pajela},
that the number of $r$th power free monic polynomials of degree $n$ is given by
\begin{align}
\begin{cases}
q^{n} - q^{n +1 - r} 
&
\text{ if }
n \geq r , \\
q^{n}
&
\text{ if }
n \leq r-1 . 
\end{cases}
\label{equation_M_d=1}
\end{align}
}
\label{example_d=1}
\end{example}

\begin{proposition}
Let $d$, $r$ and $m$ be positive integers such that $r < m$.
Then the number of $d$-tuples of monic polynomials $(p_{1}, \ldots, p_{d})$
of degree ${\bf n} = (n_{1}, \ldots, n_{d})$
such that the multiplicity of each irreducible factor
in $\gcd(p_{1}, \ldots, p_{d})$ is
congruent to one of $0,1, \ldots, r-1 \bmod m$
is given by
\begin{align}
\alpha_{q}({\bf n})
& =
\begin{cases}
q^{N}
\cdot
\left(1 + q^{1-md} + \cdots + q^{\lfloor \min({\bf n})/m \rfloor \cdot (1-md)}\right)
& \text{ if } \min({\bf n}) < r, \\[5 mm]
q^{N}
\cdot
\left(1 + q^{1-md} + \cdots + q^{\lfloor \min({\bf n})/m \rfloor \cdot (1-md)}\right)
& \\
-
q^{N - rd + 1}
\cdot
\left(1 + q^{1-md} + \cdots + q^{\lfloor (\min({\bf n})-r)/m \rfloor \cdot (1-md)}\right)
& \text{ if } \min({\bf n}) \geq r,
\end{cases}
\label{equation_d_r_m}
\end{align}
where $N$ is again the sum $n_{1} + \cdots + n_{d}$.
\end{proposition}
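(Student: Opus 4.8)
The plan is to mimic the proof of Proposition~\ref{proposition_r_d}, the only genuinely new ingredient being the computation of the relevant generating function $g({\bf z})$. I take the weights $w_{\bf n}$ to equal $1$ when $\min(n_{1}, \ldots, n_{d})$ is congruent to one of $0, 1, \ldots, r-1 \bmod m$ and $0$ otherwise. Since $r \geq 1$ this forces $w_{\bf 0} = 1$, as the hypotheses of Theorem~\ref{theorem_main} require, and by the remark following the definition of $\alpha_{q}({\bf n})$ the quantity $\alpha_{q}({\bf n})$ then counts exactly the $d$-tuples described in the statement, because the multiplicity of an irreducible $s$ in $\gcd(p_{1}, \ldots, p_{d})$ is $\min(\mult({\bf p};s))$.

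To compute $g({\bf z})$ I would first record, abbreviating $P = z_{1} \cdots z_{d}$, the two elementary identities
\[
\sum_{\min(n_{1}, \ldots, n_{d}) \geq t} {\bf z}^{\bf n} = \frac{P^{t}}{(1-z_{1}) \cdots (1-z_{d})}, \qquad \sum_{\min(n_{1}, \ldots, n_{d}) = t} {\bf z}^{\bf n} = \frac{P^{t} \cdot (1-P)}{(1-z_{1}) \cdots (1-z_{d})},
\]
the first because $\min \geq t$ means every $n_{j} \geq t$, the second by subtracting the $t+1$ case from the $t$ case. Summing the second identity over the admissible values $t = km + i$ with $k \geq 0$ and $0 \leq i \leq r-1$, and evaluating the geometric series $\sum_{k \geq 0} P^{km} = 1/(1-P^{m})$ and $\sum_{i=0}^{r-1} P^{i} = (1-P^{r})/(1-P)$, I expect the factor $1-P$ to cancel and leave the clean form
\[
g({\bf z}) = \frac{1 - P^{r}}{(1 - P^{m}) \cdot (1-z_{1}) \cdots (1-z_{d})}.
\]
This step, the bookkeeping that produces $g({\bf z})$ in factored shape, is the crux of the argument; everything afterwards parallels Proposition~\ref{proposition_r_d}.

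With $g({\bf z})$ in this form I would invoke Theorem~\ref{theorem_main} exactly as in the earlier proposition: reading off the unique exponents in~\eqref{equation_product}, namely $a_{(r, \ldots, r)} = -1$, $a_{(m, \ldots, m)} = 1$, $a_{\bf u} = 1$ for each of the $d$ unit coordinate vectors ${\bf u}$, and $a_{\bf i} = 0$ otherwise, the theorem replaces each factor $1 - {\bf z}^{\bf i}$ by $1 - q \cdot {\bf z}^{\bf i}$, yielding
\[
\sum_{{\bf n} \geq {\bf 0}} \alpha_{q}({\bf n}) \cdot {\bf z}^{\bf n} = \frac{1 - q \cdot P^{r}}{(1 - q \cdot P^{m}) \cdot (1-q z_{1}) \cdots (1-q z_{d})}.
\]

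It then remains to extract the coefficient of ${\bf z}^{\bf n}$. Expanding $1/(1 - q \cdot P^{m}) = \sum_{k \geq 0} q^{k} P^{mk}$ and $1/\prod_{j}(1-q z_{j}) = \sum_{\bf a} q^{|{\bf a}|} {\bf z}^{\bf a}$, with $|{\bf a}| = a_{1} + \cdots + a_{d}$, their product contributes ${\bf z}^{\bf n}$ precisely when $a_{j} = n_{j} - mk \geq 0$ for all $j$, that is for $0 \leq k \leq \lfloor \min({\bf n})/m \rfloor$, each such $k$ carrying weight $q^{N + k(1-md)}$; this produces the first summand $q^{N}(1 + q^{1-md} + \cdots + q^{\lfloor \min({\bf n})/m \rfloor \cdot (1-md)})$. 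The numerator factor $-q \cdot P^{r}$ merely shifts ${\bf n}$ by $(r, \ldots, r)$, contributing the second summand $-q^{N-rd+1}(1 + \cdots + q^{\lfloor (\min({\bf n})-r)/m \rfloor \cdot (1-md)})$ when $\min({\bf n}) \geq r$ and nothing when $\min({\bf n}) < r$, whence~\eqref{equation_d_r_m}. The one point requiring care is that in degenerate cases (for instance $r=1$, $d=1$) the special exponents coincide, so the corresponding $a_{\bf i}$ simply add; since the replacement rule of Theorem~\ref{theorem_main} depends only on the total exponent attached to each ${\bf i}$, it remains valid and the final formula is unaffected.
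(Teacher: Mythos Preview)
Your proof is correct and follows essentially the same route as the paper: identify the weight generating function as
\(
g({\bf z}) = (1 - P^{r})\big/\bigl((1 - P^{m})\prod_{j}(1-z_{j})\bigr)
\)
with $P=z_{1}\cdots z_{d}$, apply Theorem~\ref{theorem_main} to pass to $(1 - qP^{r})/\bigl((1 - qP^{m})\prod_{j}(1-qz_{j})\bigr)$, and extract coefficients by expanding the two geometric denominators. The only cosmetic difference is that the paper obtains $g({\bf z})$ by observing directly that the condition $\min({\bf n})\equiv 0,\ldots,r-1\bmod m$ amounts to multiplying the already-computed generating function~\eqref{equation_min<r} for $\min({\bf n})<r$ by $1/(1-P^{m})$, whereas you rebuild it from the level sets $\{\min({\bf n})=t\}$; both computations arrive at the same factored form.
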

\begin{proof}
The associated generating function $g({\bf z})$ is given by
\begin{align*}
\sum_{\substack{n_{1}, \ldots, n_{d} \geq 0 \\ \min(n_{1}, \ldots, n_{d}) \equiv 0,1, \ldots r-1 \bmod m}}
z_{1}^{n_{1}} \cdots z_{d}^{n_{d}}
& =
\frac{1}{1 - z_{1}^{m} \cdots z_{d}^{m}}
\cdot
\sum_{\substack{n_{1}, \ldots, n_{d} \geq 0 \\ \min(n_{1}, \ldots, n_{d}) < r}}
z_{1}^{n_{1}} \cdots z_{d}^{n_{d}} \\
& =
\frac{1 - z_{1}^{r} \cdots z_{d}^{r}}{(1 - z_{1}^{m} \cdots z_{d}^{m}) \cdot (1-z_{1}) \cdots (1-z_{d})} ,
\end{align*}
where the last step is by equation~\eqref{equation_min<r}.
Note that
\begin{align*}
\sum_{{\bf n} \geq {\bf 0}} c_{{\bf n}} \cdot {\bf z}^{{\bf n}}
& =
\frac{1}{(1 - q \cdot z_{1}^{m} \cdots z_{d}^{m}) \cdot (1 - q \cdot z_{1}) \cdots (1 - q \cdot z_{d})} \\
& =
\left(
\sum_{k \geq 0} q^{k} \cdot (z_{1} \cdots z_{d})^{mk}
\right)
\cdot
\left(
\sum_{{\bf n} \geq {\bf 0}} q^{N} \cdot {\bf z}^{\bf n} 
\right) \\
& =
\sum_{{\bf n} \geq {\bf 0}}
q^{N}
\cdot
\left(1 + q^{1-md} + \cdots + q^{\lfloor \min({\bf n})/m \rfloor \cdot (1-md)}\right)
\cdot
{\bf z}^{\bf n} .
\end{align*}
Hence the coefficient of ${\bf z}^{\bf n}$ in the generating function
\begin{align*}
\sum_{{\bf n} \geq {\bf 0}} \alpha_{q}({\bf n}) \cdot {\bf z}^{{\bf n}}
& =
\frac{1 - q \cdot z_{1}^{r} \cdots z_{d}^{r}}
{(1 - q \cdot z_{1}^{m} \cdots z_{d}^{m}) \cdot (1 - q \cdot z_{1}) \cdots (1 - q \cdot z_{d})} 
\end{align*}
is given by
\begin{align*}
\alpha_{q}({\bf n})
& =
\begin{cases}
c_{{\bf n}}
& \text{ if } \min({\bf n}) < r , \\
c_{{\bf n}} - q \cdot c_{{\bf n} - (r, \ldots, r)}
& \text{ if } \min({\bf n}) \geq r , \\
\end{cases}
\end{align*}
which is the expression in equation~\eqref{equation_d_r_m}.
\end{proof}

\begin{proposition}
Let $a$ and $b$ be two relatively prime positive integers.
Let $M$ be the monoid $\{a \cdot i + b \cdot j : i,j \in \Nnn\}$.
Then the number of monic polynomials in $F_{q}[x]$ of degree~$n$ such
that the multiplicity of each irreducible factor belongs to
the monoid~$M$ is given by
\begin{align}
\alpha_{q}(n)
& =
\sum_{\substack{i, j \geq 0 \\ a \cdot i + b \cdot j = n}} q^{i+j}
-
\sum_{\substack{i, j \geq 0 \\ a \cdot i + b \cdot j = n-ab}} q^{i+j+1} .
\label{equation_a_b}
\end{align}
\label{proposition_a_b}
\end{proposition}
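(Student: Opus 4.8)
The plan is to apply Theorem~\ref{theorem_main} in the single-variable case $d=1$ with the $0/1$ weights $w_{n}$ defined to equal $1$ precisely when $n \in M$ and $0$ otherwise. With this choice $\alpha_{q}(n)$ counts exactly the monic polynomials of degree $n$ all of whose irreducible multiplicities lie in $M$, so the object to understand is the associated generating function $g(z) = \sum_{n \in M} z^{n}$.

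The crucial preliminary step is to write $g(z)$ in the product form~\eqref{equation_product}. I claim that
\begin{align*}
g(z) = \sum_{n \in M} z^{n} = \frac{1 - z^{ab}}{(1-z^{a})(1-z^{b})} ,
\end{align*}
which is the generating function of the numerical semigroup generated by $a$ and $b$. I would prove this using $\gcd(a,b)=1$ as follows. Expanding $1/\left((1-z^{a})(1-z^{b})\right) = \sum_{i,j \geq 0} z^{ai+bj}$, the coefficient of $z^{n}$ is the number $r(n)$ of representations $n = ai+bj$ with $i,j \geq 0$, so the coefficient of $z^{n}$ on the right-hand side above is $r(n) - r(n-ab)$. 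Since $a$ is invertible modulo $b$, every $n$ has a unique representation $n = ai+bj$ with $0 \leq i \leq b-1$ and $j \in \Zzz$, and $n \in M$ holds exactly when this distinguished $j$ is non-negative. A short case analysis on whether $j \geq a$ then shows that $r(n) - r(n-ab)$ equals the indicator of the event $n \in M$, which gives the identity.

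Once this product form is in hand, comparison with~\eqref{equation_product} yields $a_{a} = a_{b} = 1$ and $a_{ab} = -1$, with all other $a_{\bf i}$ equal to $0$ (and the obvious collapse of indices when $a=b=1$). By the uniqueness of the factorization~\eqref{equation_product}, applying Theorem~\ref{theorem_main} amounts to replacing each factor $(1-z^{i})$ by $(1-q z^{i})$, so that
\begin{align*}
\sum_{n \geq 0} \alpha_{q}(n) \cdot z^{n} = \frac{1 - q z^{ab}}{(1 - q z^{a})(1 - q z^{b})} .
\end{align*}

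It then remains to extract the coefficient of $z^{n}$. Writing $1/\left((1-qz^{a})(1-qz^{b})\right) = \sum_{m \geq 0} c_{m} \, z^{m}$ with $c_{m} = \sum_{ai+bj=m} q^{i+j}$, multiplication by $1 - q z^{ab}$ gives $\alpha_{q}(n) = c_{n} - q \cdot c_{n-ab}$, which is precisely equation~\eqref{equation_a_b}. The only real obstacle here is the numerical-semigroup identity for $g(z)$, where the hypothesis $\gcd(a,b)=1$ is essential; the transfer through Theorem~\ref{theorem_main} and the final coefficient extraction are routine.
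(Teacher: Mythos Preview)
Your proof is correct and follows essentially the same route as the paper: identify $g(z)=\sum_{n\in M}z^{n}=(1-z^{ab})/\bigl((1-z^{a})(1-z^{b})\bigr)$, apply Theorem~\ref{theorem_main} to obtain $\sum_{n}\alpha_{q}(n)z^{n}=(1-qz^{ab})/\bigl((1-qz^{a})(1-qz^{b})\bigr)$, and read off coefficients. The only difference is that the paper simply cites this numerical-semigroup generating function (Beck--Robins, Exercise~1.33), whereas you supply a short self-contained proof via the representation count $r(n)-r(n-ab)$; your argument there is correct.
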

\begin{proof}
The associated generating function $g(z)$ is given by
\begin{align*}
g(z)
& =
\sum_{n \in M} z^{n}
=
\frac{1- z^{ab}}{(1- z^{a}) \cdot (1- z^{b})} ;
\end{align*}
see for instance~\cite[Exercise~1.33]{Beck_Robins}.
To apply Theorem~\ref{theorem_main}
note that
\begin{align*}
\frac{1}{(1- q \cdot z^{a}) \cdot (1- q \cdot z^{b})}
& =
\sum_{n \geq 0}
\sum_{\substack{i, j \geq 0 \\ a \cdot i + b \cdot j = n}} q^{i+j} \cdot z^{n} .
\end{align*}
Multiplying with $1- q \cdot z^{ab}$ yields
the desired generating function
\begin{align}
\sum_{n \geq 0} \alpha_{q}(n) \cdot z^{n}
=
\frac{1- q \cdot z^{ab}}{(1- q \cdot z^{a}) \cdot (1- q \cdot z^{b})}
& =
(1- q \cdot z^{ab})
\cdot
\bigg(
\sum_{n \geq 0}
\sum_{\substack{i, j \geq 0 \\ a \cdot i + b \cdot j = n}} q^{i+j} \cdot z^{n}
\bigg)
\label{equation_generating_function_a_b}
\end{align}
and hence the coefficient of $z^{n}$ is given by
equation~\eqref{equation_a_b}.
\end{proof}

Assume that $n$ is large enough and that
we have a pair $(i,j)$ of integers such that
$a \cdot i + b \cdot j = n$.
Then the index of every summand in the first sum in~\eqref{equation_a_b}
is of the form
$(i + k \cdot b, j - k \cdot a)$
and thus the powers of $q$ in this sum
are of the form
$i + j + k \cdot (b-a)$.
That is, the powers lie in the infinite arithmetic progression
containing $i+j$ with step size $b-a$.
Similarly, the summands in the second sum in~\eqref{equation_a_b}
are indexed by pairs of the form
$(i - b + k \cdot b, j - k \cdot a)$,
and hence the powers in this sum
are of the form
$i + j + 1 - b + k \cdot (b-a)$.
Hence if any cancellation occurs between the two sums,
we need $b-1$ to be divisible by $b-a$, that is,
$b \equiv 1 \bmod b-a$.
Note that this condition is equivalent to
$a \equiv 1 \bmod b-a$.
In this case we present an improved statement without
any cancellation. In order to do so, we follow the notation
of~\cite[Chapter~1]{Beck_Robins}:
Let $a^{-1}$ and $b^{-1}$ denote two integers such that
\begin{align*}
a \cdot a^{-1} & = 1 \bmod b,  &
b \cdot b^{-1} & = 1 \bmod a .
\end{align*}
Furthermore, let $\{x\}$ denote the fractional part of the real number $x$,
that is, $\{x\} = x - \lfloor x \rfloor$.

\begin{proposition}
Let $a$ and $b$ be two relatively prime positive integers
such that $a < b$ and $b \equiv a \equiv 1 \bmod b-a$.
Let $M$ be the monoid $\{a \cdot i + b \cdot j : i,j \in \Nnn\}$
and let $n > a \cdot b - a - b$.
Then the number of monic polynomials in $F_{q}[x]$ of degree~$n$ such
that the multiplicity of each irreducible factor belongs to
the monoid~$M$ is given by
\begin{align}
\alpha_{q}(n)
& =
\frac{q^{b} - q}{q^{b} - q^{a}}
\cdot
q^{n/a - (b-a) \cdot \{b^{-1} \cdot n / a\}}
-
\frac{q^{a} - q}{q^{b}-q^{a}} 
\cdot
q^{n/b + (b-a) \cdot \{a^{-1} \cdot n / b\}} .
\label{equation_a_b_improved}
\end{align}
\label{proposition_a_b_improved}
\end{proposition}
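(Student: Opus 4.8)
The plan is to evaluate the two sums in~\eqref{equation_a_b} of Proposition~\ref{proposition_a_b} in closed form. Write $\alpha_{q}(n) = S_{1} - S_{2}$, where
\[
S_{1} = \sum_{\substack{i,j \geq 0 \\ a i + b j = n}} q^{i+j},
\qquad
S_{2} = q \cdot \sum_{\substack{i,j \geq 0 \\ a i + b j = n - ab}} q^{i+j}.
\]
Since $n > ab - a - b$, the equation $a i + b j = n$ has a nonnegative solution, and all of them are obtained from one another by the moves $(i,j) \mapsto (i+b, j-a)$. Each such move raises $i+j$ by exactly $b-a$, so the exponents occurring in $S_{1}$ form an arithmetic progression with common difference $b-a$, running from the minimum (at the least admissible value of $i$) up to the maximum (at the least admissible value of $j$).

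First I would identify these two extreme exponents. Reducing $a i + b j = n$ modulo $b$ gives $i \equiv a^{-1} \cdot n \bmod b$, so the least admissible $i$ is $i_{\min} = b \cdot \{a^{-1} \cdot n / b\}$; reducing modulo $a$ gives the least admissible $j$ equal to $a \cdot \{b^{-1} \cdot n / a\}$. Substituting $i = i_{\min}$ shows the minimal exponent is $E_{2} = n/b + (b-a) \cdot \{a^{-1} \cdot n / b\}$, and substituting the least $j$ shows the maximal exponent is $E_{1} = n/a - (b-a) \cdot \{b^{-1} \cdot n / a\}$, which are exactly the two exponents in~\eqref{equation_a_b_improved} (that they are integers follows from $a \cdot a^{-1} \equiv 1 \bmod b$ and $b \cdot b^{-1} \equiv 1 \bmod a$). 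Summing the geometric series then gives $S_{1} = (q^{E_{1}+b} - q^{E_{2}+a})/(q^{b}-q^{a})$.

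Next I would eliminate $S_{2}$. The map $(i',j') \mapsto (i'+b, j')$ is a bijection from the nonnegative solutions of $a i + b j = n - ab$ onto the nonnegative solutions of $a i + b j = n$ with $i \geq b$, under which $i' + j' = i + j - b$. Since the only solution of $a i + b j = n$ with $i < b$ is the one attaining $i_{\min}$, and it carries the minimal exponent $E_{2}$, this gives $S_{2} = q^{1-b} \cdot (S_{1} - q^{E_{2}})$.

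Finally I would combine these, obtaining $\alpha_{q}(n) = (1 - q^{1-b}) \cdot S_{1} + q^{E_{2}+1-b}$, substitute the closed form for $S_{1}$, and simplify over the common denominator $q^{b}-q^{a}$: the terms in $q^{E_{2}+a+1-b}$ cancel, leaving the numerator $q^{E_{1}} \cdot (q^{b}-q) - q^{E_{2}} \cdot (q^{a}-q)$, which is precisely~\eqref{equation_a_b_improved}. The main obstacle is the exact bookkeeping of the extreme exponents in the second step together with this final cancellation; everything else is routine. I note that this geometric-series argument does not actually use the hypothesis $b \equiv a \equiv 1 \bmod b-a$: as explained before the statement, that congruence is exactly the condition under which the two sums in~\eqref{equation_a_b} overlap and cancel, and the present formula is the resulting cancellation-free form, valid in fact whenever $n$ is representable.
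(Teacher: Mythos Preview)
Your argument is correct, and it is in fact cleaner and more general than the paper's own proof.

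The paper proceeds by locating the top and bottom of each of the two arithmetic progressions of exponents and then cancelling them against one another term by term: it argues that the surviving positive terms are $q^{i_{0}+j_{0}}, q^{i_{0}+j_{0}-(b-a)}, \ldots, q^{i_{0}+j_{0}+1-a}$ and similarly for the surviving negative terms, then sums each short progression. That term-by-term matching is exactly where the hypothesis $a \equiv b \equiv 1 \bmod (b-a)$ enters, since one needs $(a-1)/(b-a)$ to be a nonnegative integer for those progressions to make sense. The paper then appends a separate partial-fraction argument to pin down the range $n > ab-a-b$.

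You instead sum the full geometric series for $S_{1}$ outright and recognise $S_{2}$ as $q^{1-b}$ times $S_{1}$ minus its single bottom term, via the shift $(i',j') \mapsto (i'+b,j')$. The algebraic cancellation of $q^{E_{2}+a+1-b}$ then does all the work at once, with no need to count how many terms survive. As you observe, this bypasses the congruence hypothesis entirely and shows that~\eqref{equation_a_b_improved} holds for every representable $n$, not merely for $n > ab-a-b$; the congruence condition only explains \emph{why} one might have guessed a cancellation-free closed form, not why the closed form is valid. So your route is both shorter and yields a strictly stronger statement.
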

\begin{proof}
Assume that $n$ is large.
Let $(i_{0},j_{0})$ be given by
$i_{0} = n/a - b \cdot \{b^{-1} \cdot n / a\}$
and
$j_{0} = a \cdot \{b^{-1} \cdot n / a\}$.
Then the pair $(i_{0}, j_{0})$ satisfies
$a \cdot i + b \cdot j = n$,
and $j_{0}$ is the smallest such nonnegative integer $j$
and $i_{0}$ is the largest such integer~$i$.
Since $a < b$ we know that $i_{0}+j_{0}$ is the maximum over all
$i+j$ where $a \cdot i + b \cdot j = n$.
Similarly, the pair $(i_{0}-b,j_{0})$
is the corresponding such pair for the equation
$a \cdot i + b \cdot j = n - ab$.
Hence the powers in the first sum in equation~\eqref{equation_a_b}
are a finite arithmetic progression starting at $i_{0}+j_{0}$ and decreasing with step $b-a$.
Similarly, the powers in the second sum in equation~\eqref{equation_a_b}
are an arithmetic progression starting at $i_{0}+j_{0}-b$ and also decreasing with step $b-a$.
Note that 
$i_{0} + j_{0} = n/a - (b-a) \cdot \{b^{-1} \cdot n / a\}$.
Thus the positive terms in the difference in equation~\eqref{equation_a_b}
are given by
\begin{align}
\label{equation_up}
q^{i_{0}+j_{0}}
+
q^{i_{0}+j_{0} - (b-a)}
+
\cdots
+
q^{i_{0}+j_{0} + 1 - a}
& =
\frac{q^{1-b} - 1}{q^{a-b} - 1}
\cdot
q^{i_{0}+j_{0}} \\
\nonumber
& =
\frac{q^{b} - q}{q^{b} - q^{a}}
\cdot
q^{n/a - (b-a) \cdot \{b^{-1} \cdot n / a\}} .
\end{align}
Similarly, the negative terms in equation~\eqref{equation_a_b}
is obtained by switching the roles of $a$ and $b$
and also switching $n$ and $n-ab$.
That is, let
$i_{1} = b \cdot \{a^{-1} \cdot (n-ab) / b\} = b \cdot \{a^{-1} \cdot n / b\}$
and
$j_{1} = (n-ab)/b - a \cdot \{a^{-1} \cdot (n-ab) / b\}
= n/b - a - a \cdot \{a^{-1} \cdot n / b\}$.
Then $a \cdot i_{1} + b \cdot j_{1} = n - a b$,
and $q^{i_{1}+j_{1}+1}$ is the smallest power occurring in the second sum in~\eqref{equation_a_b}.
Similarly, 
$a \cdot i_{1} + b \cdot (j_{1}+a) = n$,
and
$q^{i_{i} + (j_{1}+a)}$ is the smallest power in the first sum in~\eqref{equation_a_b}.
After cancelling, the remaining negative terms are
\begin{align}
\label{equation_down}
&
q^{i_{1}+j_{1}+1}
+
q^{i_{1}+j_{1}+1+(b-a)}
+
\cdots
+
q^{i_{1}+j_{1}+a-(b-a)} \\
\nonumber
& =
(q^{1-a} + q^{1-a+(b-a)} + \cdots + q^{a-b})
\cdot 
q^{n/b + (b-a) \cdot \{a^{-1} \cdot n / b\}} \\
\nonumber
& =
\frac{q^{a}-q}{q^{b}-q^{a}}
\cdot 
q^{n/b + (b-a) \cdot \{a^{-1} \cdot n / b\}} .
\end{align}
Finally, we have to motivate the lower bound on $n$.
Note that the expression in equation~\eqref{equation_up}
increases by a multiplicative factor of $q$ when $n$ increases by $a$.
Similarly, 
the expression in equation~\eqref{equation_down}
increases by a factor of $q$ when $n$ increases by $b$.
Hence the generating functions associated with these two expressions are
\begin{align*}
\sum_{n \geq 0}
\frac{q^{b} - q}{q^{b} - q^{a}}
\cdot
q^{n/a - (b-a) \cdot \{b^{-1} \cdot n / a\}}
\cdot z^{n}
& =
\frac{A(z)}{1- q \cdot z^{a}} , \\
\sum_{n \geq 0}
\frac{q^{a} - q}{q^{b}-q^{a}} 
\cdot
q^{n/b + (b-a) \cdot \{a^{-1} \cdot n / b\}} 
\cdot z^{n}
& =
\frac{B(z)}{1- q \cdot z^{b}} ,
\end{align*}
where $A(z)$ and $B(z)$ are polynomials such that
$\deg(A(z)) \leq a-1$ and $\deg(B(z)) \leq b-1$.
Hence the partial fraction decomposition
of the generating function in 
equation~\eqref{equation_generating_function_a_b}
is given by
\begin{align*}
\frac{1- q \cdot z^{ab}}{(1- q \cdot z^{a}) \cdot (1- q \cdot z^{b})}
& =
P(z)
+
\frac{A(z)}{1- q \cdot z^{a}}
-
\frac{B(z)}{1- q \cdot z^{b}} ,
\end{align*}
where $P(z)$ is a polynomial of degree $ab-a-b$.
Hence for $n$ greater than $\deg(P(z)) = ab-a-b$
there is no interference from the polynomial~$P(z)$.
\end{proof}

\begin{example}
{\rm
Let the monoid $M$ be the set $\Nnn - \{1\}$, that is, we are interested in polynomials
where no irreducible factor has multiplicity $1$. 
These polynomials have been studied before;
see~\cite{Stanley,Stong}.
This case is a special
case of the previous proposition, with $a = 2$ and $b = 3$.
Here we have $a^{-1} \equiv 2 \bmod 3$ and $b^{-1} \equiv 1 \bmod 2$.
Note that
$n/2 - \{n/2\} = \lfloor n/2 \rfloor$
and
$n/3 + \{2n/3\} = \lfloor (n-1)/3 \rfloor + 1$.
Equation~\eqref{equation_a_b_improved} simplifies to
\begin{align}
\alpha_{q}(n)
& =
q^{\lfloor n/2 \rfloor}
+
q^{\lfloor n/2 \rfloor - 1}
-
q^{\lfloor (n-1)/3 \rfloor} ,
\label{equation_a=2_b=3}
\end{align}
for $n > 1$.
This is the result of Stanley and Stong~\cite{Stanley,Stong}.
Also note that we obtain that
$\alpha_{q}(6k+2)
=
\alpha_{q}(6k+3)
=
q^{-1} \cdot \alpha_{q}(6k+4)
=
q^{-1} \cdot \alpha_{q}(6k+5)
=
q^{-2} \cdot \alpha_{q}(6k+7)$
for a non-negative integer $k$.
}
\label{example_no_multiplicity_1}
\end{example}

\section{Two results on the number of irreducible factors}
\label{section_Two_results}

Let $f_{j}(p)$ denote the number of irreducible factors of degree~$j$
in the factorization of the polynomial~$p$
including multiplicities.
That is, for $p = x^{6}+x^{4}+x^{3}+x \in F_{2}[x]$
we have $f_{1}(p) = 4$ and $f_{2}(p) = 1$
since $p$ factors as
$x \cdot (x+1)^{3} \cdot (x^{2} + x + 1)$.

Assume that the weights $(w_{{\bf n}})_{{\bf n} \geq {\bf 0}}$ are non-negative real numbers
and that $\alpha_{q}({\bf n})$ in Theorem~\ref{theorem_main}
is non-zero.
Then we can view the ratio
$\wt({\bf p})/\alpha_{q}({\bf n})$
as a probability distribution
on the set
$\{{\bf p} \in F_{q}^{\text{monic}}[x]^{d} : \deg({\bf p}) = {\bf n}\}$.

\begin{proposition}
Assume that $\alpha_{q}({\bf n})$ is non-zero
and pick a polynomial vector ${\bf p} = (p_{1}, \ldots, p_{d})$
at random from 
$\{{\bf p} \in F_{q}^{\text{monic}}[x]^{d} : \deg({\bf p}) = {\bf n}\}$
with the above described distribution.
Then the expected value of $f_{j}(p_{1})$
is given by
\begin{align*}
\frac{M(q;j)}{\alpha_{q}({\bf n})}
\cdot
[{\bf z}^{{\bf n}}]
\frac{\sum_{{\bf n} \geq {\bf 0}} n_{1} \cdot w_{{\bf n}} \cdot {\bf z}^{j \cdot {\bf n}}}
{\sum_{{\bf n} \geq {\bf 0}} w_{{\bf n}} \cdot {\bf z}^{j \cdot {\bf n}}}
\cdot
\prod_{{\bf i} > {\bf 0}}
\left(\frac{1}{1 - q \cdot {\bf z}^{{\bf i}}}\right)^{a_{\bf i}} ,
\end{align*}
where $[{\bf z}^{{\bf n}}]$ denotes the operation
extracting the coefficient of ${\bf z}^{{\bf n}}$.
\label{proposition_expected_number_linear_factors}
\end{proposition}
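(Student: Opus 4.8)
The plan is to imitate the factorization in the proof of Theorem~\ref{theorem_main}, after first linearizing $f_{j}(p_{1})$ over the individual irreducible factors. Since $f_{j}(p_{1}) = \sum_{s : \deg(s) = j} \mult(p_{1};s)$, the expected value of $f_{j}(p_{1})$, which by definition equals $\alpha_{q}({\bf n})^{-1} \cdot \sum_{\deg({\bf p}) = {\bf n}} f_{j}(p_{1}) \cdot \wt({\bf p})$, can be read off from the coefficient of ${\bf z}^{{\bf n}}$ in the generating function $\sum_{{\bf p}} f_{j}(p_{1}) \cdot \wt({\bf p}) \cdot {\bf z}^{\deg({\bf p})}$, after dividing by $\alpha_{q}({\bf n})$. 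So the whole task is to evaluate this marked generating function in product form.

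First I would fix a single monic irreducible $s_{0}$ of degree $j$ and compute $\sum_{{\bf p}} \mult(p_{1};s_{0}) \cdot \wt({\bf p}) \cdot {\bf z}^{\deg({\bf p})}$. Exactly as in Theorem~\ref{theorem_main}, this sum factors over all monic irreducible polynomials $s$, where an irreducible of degree $k$ appearing with multiplicity vector ${\bf m}$ contributes $w_{{\bf m}} \cdot {\bf z}^{k \cdot {\bf m}}$, so that its factor is $\sum_{{\bf m} \geq {\bf 0}} w_{{\bf m}} \cdot {\bf z}^{k \cdot {\bf m}}$. The only change is that the single factor belonging to $s_{0}$ now carries the extra weight $\mult(p_{1};s_{0}) = m_{1}$, and hence becomes $\sum_{{\bf m} \geq {\bf 0}} m_{1} \cdot w_{{\bf m}} \cdot {\bf z}^{j \cdot {\bf m}}$. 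Summing over the $M(q,j)$ choices of $s_{0}$ and using that every degree-$j$ irreducible contributes the same unmodified factor, I can pull out a common product and obtain
\begin{align*}
\sum_{{\bf p}} f_{j}(p_{1}) \cdot \wt({\bf p}) \cdot {\bf z}^{\deg({\bf p})}
& =
M(q,j)
\cdot
\frac{\sum_{{\bf m} \geq {\bf 0}} m_{1} \cdot w_{{\bf m}} \cdot {\bf z}^{j \cdot {\bf m}}}
{\sum_{{\bf m} \geq {\bf 0}} w_{{\bf m}} \cdot {\bf z}^{j \cdot {\bf m}}}
\cdot
\prod_{s} \sum_{{\bf m} \geq {\bf 0}} w_{{\bf m}} \cdot {\bf z}^{\deg(s) \cdot {\bf m}} .
\end{align*}
Here the trailing product over all $s$ is precisely the generating function appearing in the proof of Theorem~\ref{theorem_main}, which that theorem identifies with $\prod_{{\bf i} > {\bf 0}} \left(1/(1 - q \cdot {\bf z}^{{\bf i}})\right)^{a_{\bf i}}$, and the displayed ratio (with dummy variable ${\bf m}$ playing the role of the ${\bf n}$ in the statement) is exactly the middle factor of the claim. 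Extracting $[{\bf z}^{{\bf n}}]$ and dividing by $\alpha_{q}({\bf n})$ then yields the asserted formula.

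The one step requiring care is the pulling-out of the common product when summing over $s_{0}$. For a fixed degree-$j$ irreducible $s_{0}$, the product over $s \neq s_{0}$ is the full product with the single $s_{0}$-factor deleted; since that deleted factor is the unmodified $\sum_{{\bf m} \geq {\bf 0}} w_{{\bf m}} \cdot {\bf z}^{j \cdot {\bf m}}$ and depends only on $\deg(s_{0}) = j$ rather than on which degree-$j$ polynomial $s_{0}$ is, each of the $M(q,j)$ summands equals the same ratio times the full product. This interchange of a finite sum over $s_{0}$ with the infinite product is legitimate at the level of formal power series, because for each target degree vector ${\bf n}$ only finitely many of the factors differ from $1$ (as $w_{{\bf 0}} = 1$), so there is no convergence issue; this bookkeeping is the main, though mild, obstacle.
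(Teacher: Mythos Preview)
Your argument is correct and follows the same underlying factorization over irreducible polynomials as the paper. The only difference is in how the first moment is extracted: the paper introduces a marking variable $t$, forms the generating function $\sum_{{\bf p}} t^{f_{j}(p_{1})} \wt({\bf p}) \, {\bf z}^{\deg({\bf p})}$, factors it so that the degree-$j$ irreducibles carry an extra $t^{m_{1}}$, then applies $\partial/\partial t$ and sets $t=1$; you instead use the linearity $f_{j}(p_{1}) = \sum_{\deg(s_{0})=j} \mult(p_{1};s_{0})$ directly, so the factor $m_{1}$ appears from the outset. Your route is slightly more elementary, while the paper's marking-variable approach would generalize more readily to higher moments; otherwise the two proofs are essentially identical.
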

\begin{proof}
Consider the generating function
\begin{align*}
\sum_{{\bf n} \geq {\bf 0}}
\sum_{\substack{{\bf p} \in F_{q}^{\text{monic}}[x]^{d} \\ \deg({\bf p}) = {\bf n}}}
t^{f_{j}(p_{1})} \cdot \wt({\bf p}) \cdot {\bf z}^{{\bf n}}
& =
\prod_{\substack{s \\ \deg(s) = j}}
\sum_{{\bf n} \geq {\bf 0}} w_{{\bf n}} \cdot t^{n_{1}} \cdot {\bf z}^{j \cdot {\bf n}}
\cdot
\prod_{\substack{s \\ \deg(s) \neq j}}
\sum_{{\bf n} \geq {\bf 0}} w_{{\bf n}} \cdot {\bf z}^{\deg(s) \cdot {\bf n}} \\
& =
\left(
\sum_{{\bf n} \geq {\bf 0}} w_{{\bf n}} \cdot t^{n_{1}} \cdot {\bf z}^{j \cdot {\bf n}}
\right)^{M(q;j)}
\cdot 
\prod_{\substack{k \geq 1 \\ k \neq j}}
\left(
\sum_{{\bf n} \geq {\bf 0}} w_{{\bf n}} \cdot {\bf z}^{k \cdot {\bf n}}
\right)^{M(q,k)} \\
& =
\left(
\frac{\sum_{{\bf n} \geq {\bf 0}} w_{{\bf n}} \cdot t^{n_{1}} \cdot {\bf z}^{j \cdot {\bf n}}}
{\sum_{{\bf n} \geq {\bf 0}} w_{{\bf n}} \cdot {\bf z}^{j \cdot {\bf n}}}
\right)^{M(q,j)}
\cdot 
\prod_{k \geq 1}
\left(
\sum_{{\bf n} \geq {\bf 0}} w_{{\bf n}} \cdot {\bf z}^{k \cdot {\bf n}}
\right)^{M(q,k)} \\
& =
\left(
\frac{\sum_{{\bf n} \geq {\bf 0}} w_{{\bf n}} \cdot t^{n_{1}} \cdot {\bf z}^{j \cdot {\bf n}}}
{\sum_{{\bf n} \geq {\bf 0}} w_{{\bf n}} \cdot {\bf z}^{j \cdot {\bf n}}}
\right)^{M(q,j)}
\cdot 
\prod_{{\bf i} > {\bf 0}}
\left(\frac{1}{1 - q \cdot {\bf z}^{{\bf i}}}\right)^{a_{\bf i}} .
\end{align*}
Apply the partial derivative $\partial/\partial t$
\begin{align*}
&
\sum_{{\bf n} \geq {\bf 0}}
\sum_{\substack{{\bf p} \in F_{q}^{\text{monic}}[x]^{d} \\ \deg({\bf p}) = {\bf n}}}
f_{j}(p_{1}) \cdot t^{f_{j}(p_{1})-1} \cdot \wt({\bf p}) \cdot {\bf z}^{{\bf n}} \\
& =
M(q;j)
\cdot
\frac{\sum_{{\bf n} \geq {\bf 0}} n_{1} \cdot w_{{\bf n}} \cdot t^{n_{1}-1} \cdot {\bf z}^{j \cdot {\bf n}}}
{\sum_{{\bf n} \geq {\bf 0}} w_{{\bf n}} \cdot {\bf z}^{j \cdot {\bf n}}}
\cdot
\left(
\frac{\sum_{{\bf n} \geq {\bf 0}} w_{{\bf n}} \cdot t^{n_{1}} \cdot {\bf z}^{j \cdot {\bf n}}}
{\sum_{{\bf n} \geq {\bf 0}} w_{{\bf n}} \cdot {\bf z}^{j \cdot {\bf n}}}
\right)^{M(q;j)-1}
\cdot 
\prod_{{\bf i} > {\bf 0}}
\left(\frac{1}{1 - q \cdot {\bf z}^{{\bf i}}}\right)^{a_{\bf i}} 
\end{align*}
and
set $t$ to be $1$
\begin{align*}
\sum_{{\bf n} \geq {\bf 0}}
\sum_{\substack{{\bf p} \in F_{q}^{\text{monic}}[x]^{d} \\ \deg({\bf p}) = {\bf n}}}
f_{j}(p_{1}) \cdot \wt({\bf p}) \cdot {\bf z}^{{\bf n}}
& =
M(q;j)
\cdot
\frac{\sum_{{\bf n} \geq {\bf 0}} n_{1} \cdot w_{{\bf n}} \cdot {\bf z}^{j \cdot {\bf n}}}
{\sum_{{\bf n} \geq {\bf 0}} w_{{\bf n}} \cdot {\bf z}^{j \cdot {\bf n}}}
\cdot
\prod_{{\bf i} > {\bf 0}}
\left(\frac{1}{1 - q \cdot {\bf z}^{{\bf i}}}\right)^{a_{\bf i}} .
\end{align*}
The result follows by reading off the coefficient of ${\bf z}^{\bf n}$
and dividing by $\alpha_{q}({\bf n})$.
\end{proof}

Let $f(p)$ denote the number of not-necessarily-distinct irreducible factors in the polynomial $p$,
that is, $f(p) = \sum_{j \geq 1} f_{j}(p)$.
\begin{proposition}
The polynomial in the two variables $q$ and $t$ given by the sum
\begin{align*}
\sum_{\substack{p \in F_{q}^{\text{monic}}[x] \\ \deg(p) = n}} t^{f(p)}
\end{align*}
is symmetric in $q$ and $t$.
\label{proposition_q_t}
\end{proposition}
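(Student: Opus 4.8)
The plan is to realize the stated sum as an instance of Theorem~\ref{theorem_main} in the single-polynomial case $d = 1$, with a carefully chosen weight, and then to apply the cyclotomic identity a second time. First I would take $d = 1$ and choose the weights $w_{n} = t^{n}$. Since for a single monic polynomial $p$ we have
\begin{align*}
\wt(p)
& =
\prod_{s} w_{\mult(p;s)}
=
\prod_{s} t^{\mult(p;s)}
=
t^{\sum_{s} \mult(p;s)}
=
t^{f(p)} ,
\end{align*}
where $s$ ranges over all monic irreducible polynomials, the quantity $\alpha_{q}(n)$ produced by this weight is exactly the sum in the statement. The associated generating function is the geometric series $g(z) = \sum_{n \geq 0} t^{n} \cdot z^{n} = 1/(1 - t z)$.

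Next I would determine the exponents $a_{i}$ in the factorization~\eqref{equation_product} of $g(z)$. The cyclotomic identity is an identity of formal power series in $z$ whose coefficients are polynomials in $q$, so it remains valid after replacing $q$ by the indeterminate $t$, giving $1/(1 - t z) = \prod_{k \geq 1} (1/(1 - z^{k}))^{M(t,k)}$. By the uniqueness asserted in the Lemma preceding Theorem~\ref{theorem_main}, this forces $a_{i} = M(t,i)$. Applying Theorem~\ref{theorem_main} then yields
\begin{align*}
\sum_{n \geq 0} \alpha_{q}(n) \cdot z^{n}
& =
\prod_{i \geq 1} \left(\frac{1}{1 - q \cdot z^{i}}\right)^{M(t,i)} .
\end{align*}

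The final step exposes the symmetry. Applying the cyclotomic identity once more, now in the variable $z^{i}$ and with the genuine parameter $q$, I would expand each factor as $1/(1 - q \cdot z^{i}) = \prod_{k \geq 1}(1/(1 - z^{ik}))^{M(q,k)}$ and collect terms according to $m = ik$. Writing $c_{m} = \sum_{i k = m} M(q,k) \cdot M(t,i)$, this gives
\begin{align*}
\sum_{n \geq 0} \alpha_{q}(n) \cdot z^{n}
& =
\prod_{i, k \geq 1} \left(\frac{1}{1 - z^{ik}}\right)^{M(q,k) \cdot M(t,i)}
=
\prod_{m \geq 1} \left(\frac{1}{1 - z^{m}}\right)^{c_{m}} .
\end{align*}
Each exponent $c_{m}$ is invariant under interchanging $q$ and $t$, since the swap $q \leftrightarrow t$ is absorbed by relabeling $i \leftrightarrow k$ in the defining sum. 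Hence the whole generating function in $z$ is invariant under $q \leftrightarrow t$, and reading off the coefficient of $z^{n}$ shows that $\alpha_{q}(n)$, and therefore the sum in the statement, is symmetric in $q$ and $t$. I expect the main obstacle to be purely formal bookkeeping: justifying that the cyclotomic identity may be specialized at the indeterminate $t$ (so that $a_{i} = M(t,i)$, now read as a polynomial in $t$ rather than a count of irreducible polynomials), and checking that the double product may legitimately be regrouped by the value of $ik$ as formal power series. Once these formal points are secured, the symmetry of $c_{m}$ is immediate.
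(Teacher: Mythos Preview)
Your proof is correct and follows essentially the same strategy as the paper: write the generating function as an infinite product and then apply the cyclotomic identity to reach the manifestly symmetric double product $\prod_{i,k\geq 1}(1/(1-z^{ik}))^{M(q,k)M(t,i)}$. The only cosmetic difference is that the paper obtains the intermediate single product directly from the Euler-product factorization, arriving at $\prod_{k}(1/(1-t z^{k}))^{M(q,k)}$, whereas you route through Theorem~\ref{theorem_main} with $w_n=t^n$ to get $\prod_{i}(1/(1-q z^{i}))^{M(t,i)}$---the same expression with $q$ and $t$ already interchanged---before expanding once more.
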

\begin{proof}
The generating function for the polynomial in the statement is given by
\begin{align*}
\sum_{p \in F_{q}^{\text{monic}}[x]} t^{f(p)} \cdot z^{\deg(p)}
& =
\prod_{k \geq 1}
\left(
\frac{1}{1 - t \cdot z^{k}}
\right)^{M(q,k)} \\
& =
\prod_{k \geq 1}
\left(
\prod_{j \geq 1}
\left(
\frac{1}{1 - z^{k \cdot j}}
\right)^{M(t,j)}
\right)^{M(q,k)} .
\end{align*}
Observe that the last expression is symmetric in $q$ and $t$,
proving the statement.
\end{proof}

\begin{example}
{\rm
For degrees $n = 2, 3, 4$ we obtain the following three polynomials
in Proposition~\ref{proposition_q_t}:
\begin{align*}
&
\frac{1}{2!}
\cdot
\begin{pmatrix}
t^{2} \\ t
\end{pmatrix}^{T}
\cdot
\begin{pmatrix}
1 & 1 \\
1 & -1
\end{pmatrix}
\cdot
\begin{pmatrix}
q^{2} \\ q
\end{pmatrix} , \:\:\:\:\:\:
\frac{1}{3!}
\cdot
\begin{pmatrix}
t^{3} \\ t^{2} \\ t
\end{pmatrix}^{T}
\cdot
\begin{pmatrix}
1 & 3 & 2 \\
3 & -3 & 0 \\
2 & 0 & -2
\end{pmatrix}
\cdot
\begin{pmatrix}
q^{3} \\ q^{2} \\ q
\end{pmatrix} , \\
&
\frac{1}{4!}
\cdot
\begin{pmatrix}
t^{4} \\ t^{3} \\ t^{2} \\ t
\end{pmatrix}^{T}
\cdot
\begin{pmatrix}
1 & 6 & 11 & 6 \\
6 & 0 & -6 & 0 \\
11 & -6 & 1& -6 \\
6 & 0 & -6 & 0
\end{pmatrix}
\cdot
\begin{pmatrix}
q^{4} \\ q^{3} \\ q^{2} \\ q
\end{pmatrix} .
\end{align*}
}
\end{example}

\section{Open questions}

For any of the results in equations~\eqref{equation_M_1},
\eqref{equation_M_r=1},
\eqref{equation_M_d=1},
\eqref{equation_d_r_m},
\eqref{equation_a_b},
\eqref{equation_a_b_improved}
and~\eqref{equation_a=2_b=3},
are there bijective proofs?
Furthermore, do these enumerations have a connection with
algebraic geometry?
In the papers~\cite{Church_Ellenberg_Farb,Fulman}
the expected number of linear factors in a square-free polynomial
is determined. Are there other explicit results that follow from
Proposition~\ref{proposition_expected_number_linear_factors}?
Is there an application of Theorem~\ref{theorem_main}
where the product expression in~\eqref{equation_product}
has an infinite number of factors that differ from~$1$?
In all our examples, this product has been finite.

\section*{Acknowledgements}

The author thanks David Leep for the references
\cite{Alegre_Juarez_Pajela,Fulman}.
The author also thanks the referee and
Theodore Ehrenborg for their comments on an earlier draft.
The author thanks Cornell University for support
during a research visit in October 2023, where parts
of this paper were written.
This work was partially supported by a grant from the
Simons Foundation
(\#854548 to Richard Ehrenborg).

\newcommand{\journal}[6]{{\sc #1,} #2, {\it #3} {\bf #4} (#5), #6.}
\newcommand{\book}[4]{{\sc #1,} ``#2,'' #3, #4.}
\newcommand{\bookf}[5]{{\sc #1,} ``#2,'' #3, #4, #5.}
\newcommand{\arxiv}[3]{{\sc #1,} #2, {\tt #3}.}
\newcommand{\preprint}[3]{{\sc #1,} #2, preprint {(#3)}.}
\newcommand{\preparation}[2]{{\sc #1,} #2, in preparation.}
\newcommand{\toappear}[3]{{\sc #1,} #2, to appear in {\it #3}.}

\end{document}